\newtheorem{thm}{Theorem}[section]
\newtheorem{lem}[thm]{Lemma}
\newtheorem{rmk}{Remark}
\numberwithin{equation}{section}
\newcommand{\va}{\varphi}
\newcommand{\ppp}{\partial}
\newcommand{\ddda}{d^{\alpha}_t}
\newcommand{\pppa}{\partial_t^{\alpha}}
\newcommand{\N}{\mathbb{N}}
\newcommand{\OOO}{\Omega}
\newcommand{\CCC}{_{0}C^1[0,T]}
\newcommand{\Halp}{H_{\alpha}(0,T)}
\newcommand{\hhalf}{\frac{1}{2}}
\title{Uniqueness for fractional nonsymmetric diffusion equations and 
an application to an inverse source problem}
\author{
Daijun Jiang\\
School of Mathematics and Statistics,\\
Hubei Key Laboratory of Mathematical Sciences, \\
Central China Normal University, Wuhan 430079, China\\ 
email: {\tt jiangdaijun@mail.ccnu.edu.cn}\\
Zhiyuan Li \\
School of Mathematics and Statistics,
Shandong University of Technology, \\
266 Xincunxi Road, Zibo, Shandong 255049, China\\
email: {\tt zyli@sdut.edu.cn}\\
Matthieu Pauron\\
ENS Rennes, 35170 Bruz, France\\
email: {\tt matthieu.pauron@ens-rennes.fr}\\
Masahiro Yamamoto\\
Graduate School of Mathematical Sciences, the University of Tokyo\\
3-8-1 Komaba, Meguro-ku, Tokyo 153-8914, Japan\\
Honorary Member of Academy of Romanian Scientists,\\ 
Splaiul Independentei Street, no 54 \\
050094 Bucharest Romania,
email: {\tt myama@ms.u-tokyo.ac.jp}
}
\date{}
\begin{document}
\maketitle

\begin{abstract}
In this paper, we discuss the uniqueness for solution to time-fractional 
diffusion equation $\pppa (u-u_0) + Au=0$ with the homogeneous Dirichlet boundary condition, where an elliptic operator $-A$ is not necessarily symmetric. 
We prove that the solution is identically zero if its normal 
derivative with respect to the operator $A$ vanishes on an arbitrary small part of the spatial domain
over a time interval.
The proof is based on the Laplace transform and the spectral decomposition, and
is valid for more general time-fractional partial differential equations, including 
those involving non symmetric operators.
\end{abstract}


\section{Introduction and main results}
Throughout this paper, we assume that $T>0$, $0<\alpha<1$, and 
$\Omega\subset\mathbb R^d$ is a bounded domain with sufficiently smooth 
boundary $\partial\Omega$, and let $\nu=(\nu_1,\cdots,\nu_d)$ denote the unit 
outwards normal vector to the boundary $\partial\Omega$. Let the operator $A$ be defined by 
$$
-A\varphi:=\sum_{j,k=1}^d \partial_j(a_{jk}(x) \partial_k \varphi) 
+ \sum_{j=1}^d b_j(x)\ppp_j\va + c(x) \varphi,
\quad \varphi\in \mathcal{D}(A) := H_0^1(\Omega)\cap H^2(\Omega),
$$
where the principle part is uniformly elliptic, namely $a_{ij}=a_{ji},
b_j \in C^1(\overline\Omega)$, $1\le i,j\le d$ and $c\in L^{\infty}(\OOO)$, 
satisfy that 
\begin{equation}
\label{condi-elliptic}
a_0 \sum_{j=1}^d \xi_j^2 \le \sum_{j,k=1}^d a_{jk}(x) \xi_j\xi_k, \quad x\in\overline\Omega,\ \xi\in\mathbb R^d,
\end{equation}
where $a_0>0$ is a constant independent of $x,\xi$.
Here we set the normal derivative with respect to the operator $A$ as 
$$
\partial_{\nu_A} u = \sum_{i,j=1}^d a_{ij}\nu_i \partial_j u.
$$
for any $ u \in H^{3/2}(\partial\Omega)$.
We define the time-fractional derivative $\pppa$.
First let $H^{\alpha}(0,T)$ be the fractional Sobolev space
with the norm
$$
\vert v\vert_{H^{\alpha}(0,T)} 
= \left( \vert v\vert_{L^2(0,T)}^2 
+ \int^T_0 \int^T_0 \frac{\vert v(t) - v(s)\vert^2}
{\vert t-s\vert^{1+2\alpha}} dsdt \right)^{\hhalf}
$$
(e.g., Adams \cite{Ad}).
We set 
$$
\Halp =
\begin{cases}
\{ v \in H^{\alpha}(0,T); \, v(0) = 0\},  &\hhalf < \alpha < 1, \\
\left\{ v \in H^{\hhalf}(0,T);\, \int^T_0 \frac{\vert v(t)\vert^2}{t} dt
< \infty\right\}, &\alpha = \hhalf, \\
H^{\alpha}(0,T),  &0 < \alpha < \hhalf, 
\end{cases}
$$
and
$$
\vert v\vert_{\Halp} = 
\begin{cases}
\vert v\vert_{H^{\alpha}(0,T)}, & 0 < \alpha < 1, \, \alpha \ne \hhalf, \\
\left( \vert v\vert^2_{H^{\hhalf}(0,T)} + \int^T_0 \frac{\vert v(t)\vert^2}{t} dt\right)^{\hhalf}, & 
\alpha = \hhalf.
\end{cases}
$$
We set 
$$
J^{\alpha}v(t) = \frac{1}{\Gamma(\alpha)}\int^t_0 (t-s)^{\alpha-1}
v(s) ds.
$$
Then it is known 
$$
J^{\alpha}L^2(0,T) = \Halp, \quad 0 < \alpha \le 1
$$
and there exists a constant $C>0$ such that 
$$
C^{-1}\vert J^{\alpha}v\vert_{\Halp} \le \vert v\vert_{L^2(0,T)}
\le C\vert J^{\alpha}v\vert_{\Halp}
$$
for $v \in L^2(0,T)$ (Gorenflo, Luchko and Yamamoto \cite{GLY}).
We define the time-fractional derivative $\pppa$ in $\Halp$ by 
$$
\pppa v = (J^{\alpha})^{-1}v, \quad v \in \Halp.
$$

\begin{rmk}
\label{rem-caputo}
We define the Caputo derivative
$$
\ddda v(t) = \frac{1}{\Gamma(1-\alpha)}\int^t_0 (t-s)^{-\alpha}
\frac{dv}{ds}(s) ds
$$ 
and we consider $\ddda$ for $v \in {\CCC} := \{ v \in C^1[0,T];\,
v(0) =  0\}$.  Regarding $\ddda$ as an operator with the domain 
$\CCC$, we can see that the minimum closed extension of $\ddda$ coincides 
with $\pppa$ (Kubica, Ryszewska and Yamamoto \cite{KRY}).
\end{rmk}

We consider
\begin{equation}
\label{eq-gov}
\left\{
\begin{alignedat}{2}
& \partial_t^{\alpha} (u - u_0) + A u = 0 &\quad& \mbox{in $\Omega\times(0,T)$,}\\
&u(x,\cdot)-u_0(x)\in \Halp, &\quad&\mbox{for almost all $x\in\Omega$,}\\
&u(x,t)=0, &\quad& \mbox{$(x,t)\in\partial\Omega\times(0,T)$.}
\end{alignedat}
\right.
\end{equation}
We assume that $u_0 \in L^2(\Omega)$.  Then it is known 
(e.g., Kubica, Ryszewska and Yamamoto \cite{KRY}, Kubica and Yamamoto 
\cite{KY}) that there exists a unique solution 
\begin{equation}
\label{rslt-regu}
u \in L^2(0,T; H^1_0(\OOO) \cap H^2(\Omega)) 
\end{equation}
such that $u-u_0 \in H_{\alpha}(0,T;L^2(\OOO))$ and 
$u(\cdot,t) \in H^1_0(\OOO)$ for almost all $t \in (0,T)$.
We refer also to Gorenflo, Luchko and Yamamoto \cite{GLY},
Zacher \cite{Za}.  By \eqref{rslt-regu}, we see that $\ppp_{\nu_A}u 
\in L^2(\ppp\OOO \times (0,T))$.

We are ready to state the first main result.
\begin{thm}
\label{thm-ucp}
Let $\Gamma \subset \ppp\OOO$ be an arbitrarily chosen 
subboundary and let $T>0$.
For $u_0 \in L^2(\Omega)$, let $u$ be the solution to \eqref{eq-gov}.
If $\ppp_{\nu_A}u = 0$ on $\Gamma \times (0,T)$, then 
$u=0$ in $\OOO \times (0,T)$.
\end{thm}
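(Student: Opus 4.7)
The plan is to apply the Laplace transform in time and reduce the hypothesis $\partial_{\nu_A}u|_{\Gamma\times(0,T)}=0$ to an elliptic unique-continuation condition on a meromorphic family of resolvent-type functions, and then to peel off the generalized eigenspaces of $A$ one Jordan block at a time. First I would extend $u$ to $\Omega\times(0,\infty)$ as the solution of \eqref{eq-gov} on the half-line, and use the analyticity in $t$ of such solutions (valid for $0<\alpha<1$ and $u_0\in L^2(\Omega)$, via the Dunford-type contour representation around the sectorial spectrum of $A$) to propagate the hypothesis to $\Gamma\times(0,\infty)$. Then for $\Re z$ sufficiently large the Laplace transform $\widehat u(\cdot,z)=\int_0^\infty e^{-zt}u(\cdot,t)\,dt$ lies in $H_0^1(\Omega)\cap H^2(\Omega)$ and satisfies
\[
(A+z^\alpha)\widehat u(\cdot,z)=z^{\alpha-1}u_0\text{ in }\Omega,\qquad \widehat u(\cdot,z)=0\text{ on }\partial\Omega,\qquad \partial_{\nu_A}\widehat u(\cdot,z)=0\text{ on }\Gamma,
\]
so with $w(\mu):=(A+\mu)^{-1}u_0$ and $\mu=z^\alpha$ one has $\widehat u(\cdot,z)=z^{\alpha-1}w(z^\alpha)$ and $\partial_{\nu_A}w(\mu)|_\Gamma=0$ on a non-discrete set of $\mu$.

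Since $A$ has compact resolvent, $\mu\mapsto w(\mu)$ is meromorphic on $\mathbb{C}$ with poles exactly at the spectral points $-\lambda_n$, and analytic continuation extends $\partial_{\nu_A}w(\mu)|_\Gamma=0$ to the whole resolvent set of $-A$. Writing $P_n$ for the Riesz projection onto the generalized eigenspace at $\lambda_n$ and $N_n=(A-\lambda_n)P_n$ for the associated nilpotent with $N_n^{m_n}=0$, the Laurent expansion around $-\lambda_n$ reads
\[
w(\mu)=\sum_{j=0}^{m_n-1}\frac{(-1)^j N_n^{\,j}u_0}{(\mu+\lambda_n)^{j+1}}+\text{regular part},
\]
so each coefficient $\psi_j^{(n)}:=N_n^{\,j}u_0\in H_0^1(\Omega)\cap H^2(\Omega)$ satisfies $\partial_{\nu_A}\psi_j^{(n)}|_\Gamma=0$.

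The crux is a downward induction on $j$. The top coefficient $\psi_{m_n-1}^{(n)}$ lies in $\ker N_n$, hence is a genuine eigenfunction of $A$ for $\lambda_n$: it solves $(A-\lambda_n)\psi_{m_n-1}^{(n)}=0$ in $\Omega$ with zero Dirichlet trace and vanishing conormal derivative on $\Gamma$. The classical Aronszajn--Calder\'on--H\"ormander unique continuation theorem applied to the second-order elliptic operator $A-\lambda_n$ (valid under the smoothness on $a_{jk},b_j,c$ in our setting) then forces $\psi_{m_n-1}^{(n)}\equiv 0$. But then $N_n\psi_{m_n-2}^{(n)}=\psi_{m_n-1}^{(n)}=0$, so $\psi_{m_n-2}^{(n)}$ is itself an eigenfunction and vanishes by the same argument, and so on down to $\psi_0^{(n)}=P_n u_0=0$. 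Since the generalized eigenfunctions of the uniformly elliptic operator $A$ with compact resolvent are complete in $L^2(\Omega)$ (Agmon-type completeness theorem), $P_n u_0=0$ for every $n$ forces $u_0=0$, whence $u\equiv 0$ in $\Omega\times(0,T)$ by well-posedness of \eqref{eq-gov}.

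The main obstacle I anticipate is the non-symmetric Jordan structure: one must verify that stripping the top of each Jordan chain leaves a genuine eigenfunction (which works because $N_n$ maps each level to the next), so that the ordinary second-order unique-continuation theorem applies iteratively rather than requiring a higher-order variant with only two Cauchy data. A secondary technical point is justifying the time analyticity used in the extension step, which rests on the sectoriality of $A$ and the standard Dunford representation of the fractional propagator, but which is no longer as transparent as in the self-adjoint case.
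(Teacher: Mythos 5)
Your proposal is correct and follows essentially the same route as the paper: Laplace transform after the $t$-analytic extension, reduction to $\partial_{\nu_A}(\eta-A)^{-1}u_0|_\Gamma=0$ on the resolvent set, extraction of the Laurent/Riesz coefficients at each eigenvalue (your $N_n^{\,j}P_nu_0$ are exactly the paper's $D_m^kP_mu_0$), downward induction through each Jordan block via elliptic unique continuation from Cauchy data on $\Gamma$, and finally Agmon's completeness of the generalized eigenfunctions. The Jordan-chain peeling step you single out as the crux is precisely the content of the paper's Lemma \ref{lem-Pm}.
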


This uniqueness result is known to be equivalent to the approximate 
controllability for the adjoint system to (1.1) (e.g., Fujishiro and 
Yamamoto \cite{FY}).  For evolution equations with natural number 
order time-derivative, see e.g., Schmidt and Weck \cite{SW}, 
Triggiani \cite{Tr}.  Here we do not discuss about the approximate 
controllability.

In the case where $-A$ is symmetric, that is, $b_j = 0$ for $j=1,..., d$, 
there are several results.  For example, we refer to Sakamoto and Yamamoto 
\cite{SY}.  See also Jiang, Li, Liu and Yamamoto \cite{JLLY} for not 
necessarily symmetric $A$.  The argument in \cite{JLLY} is different from 
ours: \cite{JLLY} 
relies on the transformation of the problem to the determination 
of $u_0$ of the 
corresponding parabolic equation through the Laplace transform.  
In both \cite{JLLY} and \cite{SY}, the condition 
$u=0$ in $\omega \times (0,T)$ with a subdomain $\omega \subset \OOO$, is 
assumed in place of $\ppp_{\nu_A}u = 0$ on $\Gamma \times (0,T)$.
We note that for $\ppp_{\nu_A}u$, we here assume more regularity for the 
initial value, that is, $u_0 \in H^1_0(\OOO)$.

Our proof is based on the spectral property of the elliptic operator 
$A$, while the proof in \cite{JLLY} follows from the uniqueness result
for a parabolic equation:
$$
\left\{ 
\begin{alignedat}{2}
& \ppp_tu + Au = 0, &\quad& \mbox{in $\OOO\times (0,T)$}, \\
& u\vert_{\ppp\OOO\times (0,T)} = 0, &\quad& \ppp_{\nu_A}u\vert_{\Gamma\times (0,T)} = 0.
\end{alignedat}\right.
$$
Indeed our proof of Theorem \ref{thm-ucp} works also for higher-order elliptic 
operator $A$, for example,
\begin{equation}
\label{eq-gov^m}
\pppa u = -\left(-\Delta + \sum_{j=1}^d b_j(x)\ppp_j + c(x)\right)^mu 
\end{equation}
with $m \in \N$ under suitable conditions, but the method in \cite{JLLY}
requires us to prove that if $u$ satisfies \eqref{eq-gov^m} and 
$$
\ppp_{\nu}^ku = 0 \quad \mbox{on $\ppp\OOO\times (0,T)$, $k=0,1,..., 2m-1$,}
$$
then $u=0$ in $\OOO\times (0,T)$, which needs more arguments than our proof
for the case $m>1$.  In the case of $m=1$, this uniqueness follows from 
the well-known unique continuation for a parabolic equation 
(e.g., Isakov \cite{Is}, Yamamoto \cite{Ya}).

As one application of Theorem 1, we show the uniqueness for an inverse source 
problem.  We consider
\begin{equation}
\label{eq-sp}
\left\{
\begin{alignedat}{2}
& \partial_t^{\alpha} y + A y = \mu(t)f(x), &\quad& x\in \OOO,\, 0<t<T, \\
& y(x,\cdot)\in \Halp, &\quad& \mbox{for almost all $x\in\Omega$,}\\
& y(x,t)=0, &\quad& (x,t)\in\partial\Omega\times(0,T).  
\end{alignedat}\right.
\end{equation}
We assume that $f \in L^2(\OOO)$ and $\mu \in L^2(0,T)$.  Then we know 
(e.g., \cite{KRY}) that there exists a unique solution 
$y \in L^2(0,T;H^2(\OOO)\cap H^1_0(\OOO)) \cap H_{\alpha}(0,T;L^2(\OOO))$ to 
(1.4).  Now for given $\mu$, we discuss an inverse source problem of 
determining $f$ in $\OOO$ by $\ppp_{\nu_A}y\vert_{\Gamma \times (0,T)}$.

\begin{thm}
\label{thm-isp}
Let $\Gamma \subset \ppp\OOO$ be an arbitrarily chosen subboundary and 
let $f \in L^2(\Omega)$, $\mu \in C^1[0,T]$, $\not\equiv 0$ in $[0,T]$.
If $\ppp_{\nu_A}y = 0$ on $\Gamma \times (0,T)$, then $f=0$ in $\OOO$.
\end{thm}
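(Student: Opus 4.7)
The plan is to reduce Theorem~\ref{thm-isp} to Theorem~\ref{thm-ucp} by expressing $\pppa y$ in terms of the solution $v$ of the homogeneous problem with initial value $f$. I will let $v$ be the unique solution to $\pppa(v-f)+Av=0$ in $\OOO\times(0,T)$, with $v=0$ on $\ppp\OOO\times(0,T)$ and $v(x,\cdot)-f(x)\in\Halp$ for a.e.\ $x\in\OOO$, furnished by the existence result stated after \eqref{eq-gov}.

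The first key step is to establish the convolution identity
\[
\pppa y(x,t) \;=\; \frac{d}{dt}\int_0^t \mu(t-s)\,v(x,s)\,ds, \qquad (x,t)\in\OOO\times(0,T).
\]
I will derive this by taking the Laplace transform in $t$: from the equation for $y$ (with $y(\cdot,0)=0$) one obtains $\hat y(x,p)=\hat\mu(p)(p^\alpha+A)^{-1}f$, while the equation for $v$ (with $v(\cdot,0)=f$) gives $\hat v(x,p)=p^{\alpha-1}(p^\alpha+A)^{-1}f$. Comparing these yields $p^\alpha\hat y = p\,\hat\mu\,\hat v$, and inversion (using that $(\mu*v)(0)=0$) produces the identity.

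Next I will apply $\ppp_{\nu_A}$ to both sides and use that $\ppp_{\nu_A}$ commutes with the time operators $\pppa$ and with the time convolution. The hypothesis $\ppp_{\nu_A} y\equiv 0$ on $\Gamma\times(0,T)$ then gives
\[
\frac{d}{dt}\int_0^t \mu(t-s)\,\ppp_{\nu_A} v(x,s)\,ds = 0,\qquad x\in\Gamma,\ t\in(0,T),
\]
and integrating in $t$ (with vanishing value at $t=0$) turns this into the first-kind Volterra equation $\int_0^t \mu(t-s)\,\ppp_{\nu_A} v(x,s)\,ds = 0$ on $\Gamma\times(0,T)$. Since $\mu\not\equiv 0$ on $[0,T]$, Titchmarsh's convolution theorem then supplies some $a_0\in[0,T)$, namely the supremum of $a$ such that $\mu\equiv 0$ on $[0,a)$, with $\ppp_{\nu_A} v(x,\cdot)\equiv 0$ on $(0,T-a_0)$ for a.e.\ $x\in\Gamma$.

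To propagate this vanishing to the full interval $(0,T)$ I will invoke the real-analytic dependence in $t>0$ of the solution to the homogeneous fractional Cauchy problem: since $-A$ is sectorial, $v(\cdot,t)$ extends analytically in $t$ with values in $H^2(\OOO)\cap H^1_0(\OOO)$ via the Mittag--Leffler operator calculus, so $\ppp_{\nu_A} v(x,\cdot)$ is real-analytic in $t>0$ for a.e.\ $x\in\Gamma$. Analytic continuation from the non-empty sub-interval $(0,T-a_0)$ then gives $\ppp_{\nu_A} v\equiv 0$ on $\Gamma\times(0,T)$, and applying Theorem~\ref{thm-ucp} to $v$ forces $v\equiv 0$, whence $f=v(\cdot,0)=0$ in $\OOO$. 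The hard part will be the rigorous justification of the convolution identity at the regularity level required for $\ppp_{\nu_A}$ to commute with $\pppa$ and with the time convolution; the $t$-analyticity of $v$ away from $t=0$ is a standard by-product of the sectoriality of $A$.
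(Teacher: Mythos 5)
Your proposal follows the same overall reduction as the paper: both establish the convolution identity $J^{1-\alpha}y=\mu*v$ (equivalently $\pppa y=\frac{d}{dt}(\mu*v)$) relating $y$ to the solution $v$ of the homogeneous problem with initial value $f$, push it through $\ppp_{\nu_A}$ to obtain $\int_0^t\mu(t-s)\,\ppp_{\nu_A}v(\cdot,s)\,ds=0$ on $\Gamma\times(0,T)$, and finish by applying Theorem \ref{thm-ucp} to $v$. The two differences are as follows. First, the paper gets the identity from the Duhamel representation $y=\theta*v$ with $J^{1-\alpha}\theta=\mu$ (Lemma \ref{lem-Duhamel}, quoted from the literature) followed by an application of $J^{1-\alpha}$ and Fubini, whereas you derive it by Laplace transform; these are equivalent, and your version needs the extension of $\mu$ and $y$ to $(0,\infty)$ and the exponential bounds of Lemma \ref{lem-analy}, which you correctly flag as the technical burden. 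Second, and more substantively, the deconvolution step diverges: the paper differentiates the Volterra equation in $t$ and runs a Gronwall argument, which requires $\mu(0)\neq 0$ --- a hypothesis that is invoked in the printed proof but does not appear in the statement of the theorem; your route via Titchmarsh's convolution theorem (applied for a.e.\ $x\in\Gamma$) combined with the $t$-analyticity of $v$ from Lemma \ref{lem-analy} (note $\ppp_{\nu_A}$ is bounded from $H^2(\OOO)$ to $L^2(\ppp\OOO)$, so $\ppp_{\nu_A}v(\cdot,t)$ is analytic in $t>0$ and vanishing on the nonempty interval $(0,T-a_0)$ propagates to all of $(0,T)$) covers the stated hypothesis $\mu\not\equiv 0$ in full generality. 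In short, your argument is correct modulo the technical justifications you yourself identify, and its deconvolution step is actually stronger than the paper's, closing the small mismatch between the theorem's hypothesis on $\mu$ and the proof given there.
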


In this article, we discuss the determination of initial value and source term 
within the framework of \cite{KRY} which formulates initial boundary value 
problems (1.2) and (1.5) and establishes the well-posedness in fractional 
Sobolev spaces.  In particular, for the first time 
we establish the uniqueness in the 
inverse source problem for (1.5) for general $f\in L^2(\Omega)$ and $\mu 
\in L^2[0,T]$, 
where the time-regularity of the solution $y$ is delicate.
\\

This article is outlined as follows. 
In section \ref{sec-pre}, we first provide several preliminary results from spectral theory and prove some auxiliary lemmas for the formula of the Laplace transform for the fractional derivative $\partial_t^\alpha$ in $H_\alpha(0,T)$, which plays crucial roles in the proof of the main theorem. In section \ref{sec-ucp}, by the Laplace tranform argument, the unique continuation principle in Theorem \ref{thm-ucp} is proved. 
In section \ref{sec-isp}, based on the unique coninuation principle, from the Duhamel principle, see Lemma \ref{lem-Duhamel}, we finish the proof of Theorem \ref{thm-isp}.
Finally, a concluding remark is given in section \ref{sec-rem}.

\section{Preliminaries}
\label{sec-pre}
\subsection{Well-posedness for the forward problem}
In this part, we are concerned with the wellposedness for the initial-boundary value problem \eqref{eq-gov}. More precisely, we will show that the solution to the problem \eqref{eq-gov} admit an exponential growth, which is essential for carrying out the Laplace transform argument.
\begin{lem}[Coercivity]
\label{lem-coercivity}
For any measurable function $\varphi(\cdot)-c_0\in H_\alpha(0,T)$ with $\alpha\in(0,1)$, one has the coercivity inequality
$$
\frac2{\Gamma(\alpha)} \int_0^t (t-s)^{\alpha-1} \varphi(s)\partial_t^\alpha (\varphi(s) - c_0) d s \ge \varphi^2(t) - c_0^2,\quad \mbox{almost all $t\in(0,T)$}.
$$
\end{lem}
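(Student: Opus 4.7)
The plan is to reduce the lemma to a pointwise Alikhanov-type identity for the Caputo derivative on the dense subclass $\CCC$, and then extend it to $\Halp$ via the closure relation recorded in Remark \ref{rem-caputo}.

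First I set $v(s) := \varphi(s) - c_0 \in \Halp$, so that $\pppa(\varphi - c_0) = \pppa v$. Since $\pppa$ is defined as $(J^\alpha)^{-1}$ on $\Halp$, we have $J^\alpha \pppa v = v$. Splitting $\varphi = v + c_0$ and using linearity of $J^\alpha$, the left-hand side of the claimed inequality becomes
$$
\frac{2}{\Gamma(\alpha)}\int_0^t (t-s)^{\alpha-1}\varphi(s)\pppa v(s)\, ds
= 2 J^\alpha\bigl(v\cdot\pppa v\bigr)(t) + 2 c_0 v(t),
$$
while the right-hand side is $\varphi^2(t)-c_0^2 = v^2(t)+2 c_0 v(t)$. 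Hence the lemma is equivalent to the reduced inequality
$$
2 J^\alpha\bigl(v\cdot\pppa v\bigr)(t) \;\ge\; v^2(t) \qquad \text{for a.e. } t\in(0,T).
$$

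For $v\in\CCC$, $\pppa v$ coincides with the Caputo derivative $\ddda v$. Rewriting the difference $2v(s)\ddda v(s)-\ddda v^2(s)$ as a single integral against the kernel $(s-\tau)^{-\alpha}$ and integrating by parts, one uses $v(0)=0$ at the lower endpoint and $C^1$-regularity at $\tau=s$ to kill the upper boundary term; this yields the pointwise identity
$$
2 v(s)\ddda v(s) - \ddda v^2(s)
= \frac{s^{-\alpha}}{\Gamma(1-\alpha)}v^2(s)
+ \frac{\alpha}{\Gamma(1-\alpha)}\int_0^s \frac{(v(s)-v(\tau))^2}{(s-\tau)^{1+\alpha}}\, d\tau\;\ge\;0.
$$
Applying $J^\alpha$ (in the variable $s$) to both sides and using $J^\alpha\ddda v^2 = v^2 - v^2(0) = v^2$ (a Beta-function computation combined with $v(0)=0$) yields $2J^\alpha(v\cdot\pppa v)(t)\ge v^2(t)$, the reduced inequality on the dense subclass.

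Finally, to reach $v\in\Halp$, I invoke Remark \ref{rem-caputo}: pick $v_n\in\CCC$ with $v_n\to v$ in $L^2(0,T)$ and $\ddda v_n = \pppa v_n \to \pppa v$ in $L^2(0,T)$. Cauchy--Schwarz gives $v_n\,\pppa v_n\to v\,\pppa v$ in $L^1(0,T)$, and since $J^\alpha$ is bounded on $L^1(0,T)$ (its kernel $t^{\alpha-1}/\Gamma(\alpha)$ is integrable on $(0,T)$), one has $J^\alpha(v_n\,\pppa v_n)\to J^\alpha(v\,\pppa v)$ in $L^1(0,T)$; similarly $v_n^2\to v^2$ in $L^1(0,T)$. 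Extracting a subsequence convergent almost everywhere on both sides and passing to the limit in the reduced inequality for $v_n$ delivers the claim for $v$. The main obstacle I expect is precisely this approximation step when $\alpha\le\hhalf$: $\Halp$-functions have no pointwise trace at $t=0$, so one cannot work directly with $v(0)=0$ and must route the argument entirely through the abstract closure statement in Remark \ref{rem-caputo}.
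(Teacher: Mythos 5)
Your proposal is correct and follows essentially the same route as the paper: reduce to $\psi=\varphi-c_0\in\Halp$, establish the inequality on the dense subclass $\CCC$ via Alikhanov's estimate $2v\,\ddda v\ge \ddda(v^2)$ together with $J^\alpha \ddda (v^2)=v^2$, and then pass to the limit using $L^1(0,T)$-convergence of $J^\alpha(v_n\,\pppa v_n)$ and of $v_n^2$ plus a subsequence extraction. The only differences are cosmetic: you derive the Alikhanov identity explicitly instead of citing \cite{Al10}, and you invoke the closure statement of Remark \ref{rem-caputo} where the paper uses the density $\overline{\CCC}^{\,\Halp}=\Halp$ from \cite{GLY}, which amounts to the same approximation.
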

\begin{proof}
We divide the proof into two steps. First, we assume $\varphi-c_0\in {_0}C^1[0,T]$, then from Lemma 1 in Alikhanov \cite{Al10}, it follows that
\begin{equation}
\label{esti-Caputo}
\varphi(t) d_t^\alpha \varphi(t) \ge \frac12 d_t^\alpha (\varphi^2)(t),\quad t>0.
\end{equation}
Now noting that $\varphi(0)=c_0$, along with the formula
$$
J^\alpha d_t^\alpha \varphi = \varphi(t) - c_0,\quad t\in(0,T),
$$
we have
$$
\frac2{\Gamma(\alpha)} \int_0^t (t-s)^{\alpha-1} \varphi(s) d_s^\alpha \varphi(s) ds 
\ge \varphi^2(t) - c_0^2,\quad t\in(0,T).
$$
Since $d_t^\alpha c_0 = 0$, we further arrive at the inequality
$$
\frac2{\Gamma(\alpha)} \int_0^t (t-s)^{\alpha-1} \varphi(s) d_t^\alpha (\varphi(s) - c_0) d s \ge \varphi^2(t) - c_0^2,\quad \mbox{almost all $t\in(0,T)$}.
$$
Moreover, from Remark \ref{rem-caputo}, it follows that the Caputo derivative $d_t^\alpha$ concides with the fractional derivative $\partial_t^\alpha$ under the domain ${_0}C^1[0,T]$, and then we see that
$$
\frac2{\Gamma(\alpha)} \int_0^t (t-s)^{\alpha-1} \varphi(s) \partial_t^\alpha (\varphi(s) - c_0) d s \ge \varphi^2(t) - c_0^2,\quad \mbox{almost all $t\in(0,T)$}.
$$

Now for the case $\varphi-c_0\in H_\alpha(0,T)$, letting $\psi:=\varphi - c_0\in H_\alpha(0,T)$, it is equivalent to prove the inequality
$$
\frac2{\Gamma(\alpha)} \int_0^t (t-s)^{\alpha-1} (\psi(s)+c_0)\partial_t^\alpha \psi(s) d s \ge (\psi(t)+c_0)^2 - c_0^2,\quad \mbox{almost all $t\in(0,T)$}.
$$
From the argument used in \cite{GLY}, we see that $\overline{{_0}C^1[0,T]}^{H_\alpha(0,T)} = H_\alpha(0,T)$, hence we can choose $\psi_n\in {_0}C^1[0,T]$ and $\psi_n$ tends to $\psi$ under the norm of $H^\alpha(0,T)$ as
$n\to\infty$. Then from the conclusion in the first step, we see that
\begin{equation}
\label{ineq-coe-appro}
\frac2{\Gamma(\alpha)} \int_0^t (t-s)^{\alpha-1} (\psi_n(s)+c_0)\partial_t^\alpha \psi_n(s) d s \ge (\psi_n(t)+c_0)^2 - c_0^2,\quad \mbox{almost all $t\in(0,T)$}.
\end{equation}
Since $\psi_n\to\psi$ in $H^\alpha(0,T)$ as $n\to\infty$, hence $\psi_n(t)\to\psi(t)$ for almost all $t\in(0,T)$, we see that the right-hand side of the above inequality tends to $(\psi(t)+c_0)^2 - c_0^2$ for almost all $t\in (0,T)$. 

Now we evaluate
\begin{align*}
&\int_0^T \left|\int_0^t (t-s)^{\alpha-1}(\psi_n(s)+c_0)\partial_t^\alpha \psi_n(s) ds - \int_0^t (t-s)^{\alpha-1}(\psi(s)+c_0)\partial_t^\alpha \psi(s) ds\right|dt 
\\
\le&\int_0^T \left|\int_0^t (t-s)^{\alpha-1} \left((\psi_n(s) - \psi(s))\partial_t^\alpha \psi_n(s)\right) ds\right|dt 
\\
&+\int_0^T \left|\int_0^t (t-s)^{\alpha-1}(\psi(s)+c_0)\left( \partial_t^\alpha \psi_n(s) ds - \partial_t^\alpha \psi(s)\right) ds\right|dt
=: I_{n1} + I_{n2}.
\end{align*}
By the use of the Fubini lemma, $I_{n1}(t)$ can be further rewritten by
\begin{align*}
I_{n1}(t)
\le&\int_0^T \left(\int_s^T (t-s)^{\alpha-1}  dt  \right) \left|(\psi_n(s) - \psi(s))\partial_t^\alpha \psi_n(s)\right| ds \\
\le &\frac{T^\alpha}{\alpha}\int_0^T \left|(\psi_n(s) - \psi(s))\partial_t^\alpha \psi_n(s)\right| ds.
\end{align*}
From H\"{o}lder's inequality, by a direct calculation, the last integration on the right hand side of the above estimates can be evaluated by
\begin{align*}
\int_0^T \left|(\psi_n(s) - \psi(s))\partial_t^\alpha \psi_n(s)\right| ds
\le &\|\psi_n-\psi\|_{L^2(0,T)} \left(\int_0^T \Big|\partial_s^\alpha \psi_n(s)\Big|^2 ds\right)^{\frac12}
\\
\le & \|\psi_n-\psi\|_{L^2(0,T)} (\|\psi\|_{H^\alpha(0,T)} + 1) 
\to 0, \quad\mbox{ as $n\to\infty$.}
\end{align*}
Similarly, we see that
\begin{align*}
I_{n2} 
\le& \frac{T^\alpha}{\alpha} \int_0^T \left|(\psi(s) + c_0)\partial_t^\alpha (\psi_n(s) - \psi(s))\right| ds
\\
\le& \frac{T^\alpha}{\alpha} \|\psi + c_0\|_{L^2(0,T)} \left( \int_0^T \left|\partial_t^\alpha (\psi_n(s) - \psi(s))\right|^2 ds \right)^{\frac12} 
\\
\le& \frac{T^\alpha}{\alpha} \|\psi + c_0\|_{L^2(0,T)} \|\psi_n - \psi\|_{H^\alpha(0,T)}
\to 0,\quad \mbox{as $n\to\infty$}.
\end{align*}
Consequently, we see that
$$
\int_0^t (t-s)^{\alpha-1} (\psi_n(s)+c_0)\partial_t^\alpha \psi_n(s) ds
\quad
\mbox{tends to }
\quad
\int_0^t (t-s)^{\alpha-1} (\psi(s)+c_0)\partial_t^\alpha \psi(s) ds
$$
for almost all $t\in(0,T)$. Now letting $n\to\infty$ on both sides of the inequality \eqref{ineq-coe-appro}, we find
$$
\frac2{\Gamma(\alpha)} \int_0^t (t-s)^{\alpha-1} (\psi(s)+c_0)\partial_t^\alpha \psi(s) d s \ge (\psi(t)+c_0)^2 - c_0^2,\quad \mbox{almost all $t\in(0,T)$}.
$$
We finish the proof of the lemma by changing $\psi(t) + c_0$ back to $\varphi(t)$.
\end{proof}

\begin{lem}
\label{lem-analy}
Let $u_0\in L^2(\Omega)$, then the unique solution $u:(0,T)\to H^2(\Omega)$ is $t$-analytic and can be analytically extended to $(0,\infty)$. Moreover, there exists a constant $C>0$ such that 
$$
\|u(\cdot,t)\|_{L^2(\Omega)} \le Ce^{Ct}\|u_0\|_{L^2(\Omega)},\quad t>0.
$$ 
\end{lem}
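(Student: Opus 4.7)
The plan has two parts: the exponential $L^2$ bound will come from the coercivity lemma combined with a G\"arding-type inequality for $A$, and the $t$-analytic extension will come from a resolvent/Laplace representation based on the sectoriality of the (non-symmetric) operator $A$ on $L^2(\Omega)$.

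For the exponential bound I would apply Lemma \ref{lem-coercivity} pointwise in $x$, taking $\varphi(s)=u(x,s)$ and $c_0=u_0(x)$, and then integrate over $\Omega$. Using $\pppa(u-u_0)=-Au$ from \eqref{eq-gov}, this gives
$$
\|u(\cdot,t)\|_{L^2(\Omega)}^2 - \|u_0\|_{L^2(\Omega)}^2
\le -\frac{2}{\Gamma(\alpha)}\int_0^t (t-s)^{\alpha-1}\langle u(\cdot,s),\, Au(\cdot,s)\rangle_{L^2(\Omega)}\,ds.
$$
Integrating by parts in the bilinear form associated with $A$ and applying the uniform ellipticity \eqref{condi-elliptic} together with Young's inequality to absorb the drift term $b_j\partial_j u$ and the zeroth-order term $cu$, I obtain a G\"arding estimate of the form $\langle u,Au\rangle \ge (a_0/2)\|\nabla u\|_{L^2(\Omega)}^2-C_1\|u\|_{L^2(\Omega)}^2$, and in particular $-\langle u,Au\rangle\le C_1\|u\|_{L^2(\Omega)}^2$. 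This reduces the previous estimate to the Volterra-type inequality
$$
\|u(\cdot,t)\|_{L^2(\Omega)}^2 \le \|u_0\|_{L^2(\Omega)}^2 + \frac{2C_1}{\Gamma(\alpha)}\int_0^t (t-s)^{\alpha-1}\|u(\cdot,s)\|_{L^2(\Omega)}^2\,ds,
$$
and the fractional (Henry) Gronwall inequality then yields $\|u(\cdot,t)\|_{L^2(\Omega)}^2\le \|u_0\|_{L^2(\Omega)}^2\, E_{\alpha}(2C_1 t^{\alpha})$. The asymptotic behaviour $E_{\alpha}(z)\sim \alpha^{-1}e^{z^{1/\alpha}}$ as $z\to+\infty$ produces the desired estimate $\|u(\cdot,t)\|_{L^2(\Omega)}\le Ce^{Ct}\|u_0\|_{L^2(\Omega)}$.

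For the $t$-analyticity I would rely on the classical fact that, even without symmetry, the elliptic operator $A$ is, after a sufficiently large shift $A+\sigma I$, sectorial on $L^2(\Omega)$, so that $(\la+A)^{-1}$ is analytic on a sector and obeys the standard $1/|\la|$ estimate there. Taking the Laplace transform in $t$ of \eqref{eq-gov}, which is justified on a half-plane $\{\operatorname{Re} z > C\}$ by the exponential bound just derived, one formally obtains $\widehat{u}(z)=z^{\alpha-1}(z^{\alpha}+A)^{-1}u_0$, and Hankel-contour inversion
$$
u(\cdot,t)=\frac{1}{2\pi i}\int_{\gamma_*} e^{zt}z^{\alpha-1}(z^{\alpha}+A)^{-1}u_0\,dz
$$
along a contour $\gamma_*$ whose image under $z\mapsto z^{\alpha}$ lies in the sector of analyticity of $(\la+A)^{-1}$ then extends $t\mapsto u(\cdot,t)$ analytically to a complex sector containing $(0,\infty)$, the resolvent decay ensuring absolute convergence of both the integral and its $t$-derivatives. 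The main obstacle is precisely the non-symmetry of $A$: one cannot use an orthonormal eigenfunction expansion or a self-adjoint functional calculus, so the coercivity step must proceed through G\"arding's inequality rather than simple positivity, and the analyticity has to be extracted from the sectorial resolvent estimates rather than from a direct Mittag-Leffler series. These ingredients are classical under our smoothness assumptions on the coefficients, but must be combined carefully to produce a single bound that is simultaneously analytic in $t$ and of exponential type on the real axis.
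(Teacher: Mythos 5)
Your proposal for the exponential bound is essentially the paper's own argument: the paper likewise tests the equation with $u$, applies $J^\alpha$ together with the coercivity Lemma \ref{lem-coercivity}, absorbs the drift and zeroth-order terms via ellipticity and Cauchy--Schwarz, and closes with a generalized fractional Gronwall inequality; your explicit appeal to the Henry--Gronwall lemma and the asymptotics $E_\alpha(z)\sim\alpha^{-1}e^{z^{1/\alpha}}$ simply makes precise the step the paper labels ``the general Gronwall inequality.'' The genuine difference lies in the analyticity claim: the paper does not prove it, delegating it to Sakamoto--Yamamoto \cite{SY} and Li--Huang--Yamamoto \cite{LHY}, whereas you sketch the sectorial-resolvent/Hankel-contour construction $u(\cdot,t)=\frac{1}{2\pi i}\int_{\gamma_*}e^{zt}z^{\alpha-1}(z^\alpha+A)^{-1}u_0\,dz$. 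For the non-symmetric operator considered here this is arguably the more appropriate route, since \cite{SY} rests on a self-adjoint eigenfunction expansion; what your version buys is a self-contained proof, at the cost of verifying the sectorial resolvent estimate for a suitable shift $A+\sigma I$ and checking that $\gamma_*$ can be chosen so that $-z^\alpha$ avoids the (possibly complex) spectrum of $-A$ --- both classical under the stated coefficient hypotheses. If you write this up, record explicitly that the identification of $u$ with the contour integral proceeds via injectivity of the Laplace transform on the half-plane $\Re z>C$, which is legitimate (no circularity) because the solution already exists on $(0,T)$ for every $T$ by the well-posedness theory and your energy bound is uniform in $T$.
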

\begin{proof}
For the proof of the $t$-analyticity of the solution, one can refer to Sakamoto and Yamamoto \cite{SY}, and Li, Huang and Yamamoto \cite{LHY}. It is sufficient to show the solution $u$ admits an exponential growth.  For this, we multiply $u - u_0$ on both sides of the equation \eqref{eq-gov} to derive that
$$
\langle\partial_t^\alpha (u - u_0), u\rangle_{L^2(\Omega)} + \langle Au,u\rangle_{L^2(\Omega)} = 0.
$$
Now multiplying $J^\alpha$ on both sides of the above equation, noting Lemma \ref{lem-coercivity} and from integration by parts, we see that
\begin{align*}
&\frac12 \|u\|_{L^2(\Omega)}^2 - \frac12\|u_0\|_{L^2(\Omega)}^2 
+ J^\alpha\left(\int_\Omega a_{ij}(x) \partial_iu(x,t) \partial_ju(x,t) dx 
\right)
\\
\le& J^\alpha \left( \int_\Omega (B(x)\cdot\nabla u(x,t)+c(x)u(x,t))(u(x,t)) 
dx\right).
\end{align*}
From the ellipticity \eqref{condi-elliptic} and the Cauchy-Schwarz 
inequality, for a sufficiently small $\varepsilon>0$, we can further derive 
$$
\frac12 \|u\|_{L^2(\Omega)}^2 - \frac12\|u_0\|_{L^2(\Omega)}^2 
+ J^\alpha (\|u(\cdot,t)\|_{H^1(\Omega)}^2)
\le J^\alpha \left( \varepsilon \|u(\cdot,t)\|_{H^1(\Omega)}^2 
+ \frac{C}{\varepsilon} \|u(\cdot,t)\|_{L^2(\Omega)} \right).
$$
By taking $\varepsilon>0$ small enough, we have
$$
\|u\|_{L^2(\Omega)}^2 + J^\alpha (\|u(\cdot,t)\|_{H^1(\Omega)}^2)
\le C\|u_0\|_{H^1(\Omega)}^2 + CJ^\alpha \|u(\cdot,t)\|_{L^2(\Omega)},
$$
which implies
$$
\|u(\cdot,t)\|_{L^2(\Omega)}^2 \le C \|u_0\|_{L^2(\Omega)}^2 + \frac{C}{\Gamma(\alpha)} \int_0^t (t-s)^{\alpha-1} \|u(\cdot,s)\|_{L^2(\Omega)}^2 ds.
$$
Therefore we conclude from the general Gronwall inequality that
$$
\|u(\cdot,t)\|_{L^2(\Omega)} \le Ce^{Ct}\|u_0\|_{L^2(\Omega)},\quad t\ge0.
$$
Thus the proof of the lemma is complete.
\end{proof}

\subsection{Laplace transform of $\partial_t^\alpha$}
We define the Laplace transform $(Lu)(p)$ by
$$
(Lu)(p) := \int_0^\infty e^{-pt} u(t) dt
$$
for $\Re p>p_0$: some constant.

The formulae of the Laplace transforms for fractional derivatives are well-known. For example,
\begin{equation}
\label{eq-lap-caputo}
L(d_t^\alpha u)(p) = p^\alpha (Lu)(p) - p^{\alpha-1}u(0)
\end{equation}
for $\Re p>p_0$: some constant. The formula \eqref{eq-lap-caputo} is convenient for solving fractional differential equations. However formula \eqref{eq-lap-caputo} requires some regularity for $u$. For instance, $u(0)$ should be apparently defined and \eqref{eq-lap-caputo} does not make a sense for $u\in H^\alpha(0,T)$ with $0<\alpha<\frac12$.

Moreover such needed regularity should be consistent with the regularity which we can prove for solutions to a fractional differential equations. In particular, the regularity for the formula concerning the Laplace transform should be not very strong. Thus on the regularity assumption for the formula like \eqref{eq-lap-caputo}, we have to make adequate assumptions for $u$.

In this section, we state the formula of the Laplace transform for the fractional derivative $\partial_t^\alpha$ in $H_\alpha(0,T)$. We set 
\begin{equation}
\label{defi-V_alpha}
\begin{split}
V_\alpha(0,\infty) := \{ u\in L_{\rm loc}^1(0,\infty); u|_{(0,T)}\in H_\alpha(0,T) \mbox{ for any $T>0$,}
\\
\mbox{ there exists a constant $C=C_u>0$ such that $|u(t)| \le Ce^{Ct}$ for $t\ge0$.} 
\}
\end{split}
\end{equation}
Here we define a set $L_{\rm loc}^1(0,\infty)$ of functions defined in $(0,\infty)$ by
$$
L_{\rm loc}^1(0,\infty) = \{u;u|_{(0,T)}\in L^1(0,T)\mbox{ for any } T>0\}.
$$
Then we can state
\begin{lem}
\label{lem-lap-caputo}
The Laplace transform $L(\partial_t^\alpha u)(p)$ can be defined for $u\in V_\alpha(0,\infty)$ by
$$
L(\partial_t^\alpha u)(p) = \lim_{T\to\infty} \int_0^T e^{-pt} \partial_t^\alpha u(t) dt,\quad p>C_u
$$
and
$$
L(\partial_t^\alpha u)(p) = p^\alpha Lu(p), \quad p>C_u.
$$
\end{lem}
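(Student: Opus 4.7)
My plan is to rewrite $\partial_t^\alpha u$ as the classical derivative of $J^{1-\alpha}u$ and then perform a single integration by parts, which replaces the a priori untamed $\partial_t^\alpha u$ by the much better behaved $J^{1-\alpha}u$ before taking the Laplace transform.

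First I would establish the representation
\begin{equation*}
J^{1-\alpha} u(t) = \int_0^t (\partial_s^\alpha u)(s)\, ds, \qquad t \in (0,T),\ T>0.
\end{equation*}
Setting $v_T = (J^\alpha)^{-1}(u|_{(0,T)}) \in L^2(0,T)$ and using the semigroup identity $J^{1-\alpha} J^\alpha = J^1$ yields this formula immediately. It gives three things at once: the local functions $v_T$ patch to a single $v \in L^2_{\mathrm{loc}}(0,\infty)$ which we may legitimately call $\partial_t^\alpha u$; the function $J^{1-\alpha}u$ lies in $H^1_{\mathrm{loc}}(0,\infty)$ with $J^{1-\alpha}u(0)=0$ and weak derivative $\partial_t^\alpha u$; and the truncated integral $\int_0^T e^{-pt}\partial_t^\alpha u(t)\, dt$ is unambiguously defined for each $T$.

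Next I would integrate by parts on $[0,T]$ to obtain
\begin{equation*}
\int_0^T e^{-pt}\partial_t^\alpha u(t)\, dt = e^{-pT} J^{1-\alpha}u(T) + p\int_0^T e^{-pt} J^{1-\alpha}u(t)\, dt.
\end{equation*}
Since $J^{1-\alpha}u = k_{1-\alpha}*u$ with $k_\beta(t)=t^{\beta-1}/\Gamma(\beta)$, the bound $|u(t)|\le C_u e^{C_u t}$ built into the definition of $V_\alpha(0,\infty)$ gives an estimate of the form $|J^{1-\alpha}u(t)| \le C e^{C_u t}$. Hence the boundary term $e^{-pT}J^{1-\alpha}u(T)$ tends to $0$ for $p>C_u$, and $J^{1-\alpha}u$ lies in $L^1((0,\infty),e^{-pt}dt)$, so the remaining integral converges to $L(J^{1-\alpha}u)(p)$ as $T\to\infty$. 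The classical convolution theorem, applied to the two Laplace-integrable functions $k_{1-\alpha}$ and $u$, evaluates this limit as $p^{\alpha-1}Lu(p)$. Combining the pieces gives simultaneously the existence of $\lim_{T\to\infty}\int_0^T e^{-pt}\partial_t^\alpha u(t)\, dt$ and the identity $L(\partial_t^\alpha u)(p)=p^\alpha Lu(p)$.

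The main obstacle is the initial representation step: one has to verify that the abstractly defined inverse $(J^\alpha)^{-1}$ on the non-classical space $H_\alpha(0,T)$ really coincides with the pointwise weak derivative of $J^{1-\alpha}u$, so that the integration by parts is honest even in the low-regularity regime $\alpha<1/2$ where $u$ has no pointwise trace at $0$. Once that identification is in hand, the remaining estimates reduce to dominated convergence and standard convolution bookkeeping based on the exponential growth condition in the definition of $V_\alpha(0,\infty)$.
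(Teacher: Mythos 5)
Your proposal is correct and follows essentially the same route as the paper: identify $\partial_t^\alpha u$ with $\frac{d}{dt}J^{1-\alpha}u$, integrate by parts against $e^{-pt}$ on $[0,T]$, kill the boundary term at $T$ using the exponential growth bound for $p>C_u$, and evaluate the surviving integral as $p\cdot p^{\alpha-1}Lu(p)=p^{\alpha}Lu(p)$. The only differences are minor: you obtain the vanishing of the boundary term at $t=0$ from the representation $J^{1-\alpha}u=J^{1}\partial_t^\alpha u$ (the paper instead proves $\lim_{t\to 0}J^{1-\alpha}u(t)=0$ by a three-case Sobolev embedding and H\"older argument), and you invoke the convolution theorem where the paper carries out the corresponding Fubini computation explicitly.
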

\begin{proof}
We can refer to \cite{KRY} for the proof, but
for completeness, here we provide the proof.
First for $u\in H_\alpha(0,T)$, by Theorem 2.3 in \cite{KRY}, we can see that 
\begin{equation}
\label{eq-Ju}
J^{1-\alpha} u\in H_1(0,T) \subset H^1(0,T),
\end{equation}
and so
$$
D_t^\alpha u = \frac{d}{dt} J^{1-\alpha} u \in L^2(0,T).
$$
Theorem 2.4 from \cite{KRY} yields $\partial_t^\alpha u = \frac{d}{dt} J^{1-\alpha} u$ for $u\in H_\alpha(0,T)$. Let $T>0$ be arbitrarily fixed. Then, in terms of \eqref{eq-Ju}, we integrate by parts to obtain
\begin{align*}
\int_0^T e^{-pt} \partial_t^\alpha u(t) dt 
=& \int_0^T e^{-pt} \frac{d}{dt} (J^{1-\alpha} u)(t) dt\\
=& \left[ J^{1-\alpha} u(t) e^{-pt} \right]_{t=0}^{t=T} + p\int_0^T e^{-pt} J^{1-\alpha} u(t) dt.
\end{align*}
The Sobolev embedding (e.g., \cite{Ad}) yields
$$
H^\alpha(0,T)\subset
\begin{cases}
L^{\frac{2}{1-2\alpha}}(0,T), & \mbox{if } 0<\alpha<\frac12,\\
L^{\frac1\delta}(0,T), & \mbox{with any $\delta>0$ if } \alpha=\frac12,\\
L^\infty(0,T), & \mbox{if }\frac12<\alpha<1.
\end{cases}
$$
First for $0<\alpha<\frac12$, the H\"older inequality implies
\begin{align*}
|J^\alpha u(t)| 
&= \left| \frac1{\Gamma(1-\alpha)} \int_0^t (t-s)^{-\alpha} u(s) ds \right| 
\\
&\le C\left( \int_0^t (|t-s|^{-\alpha})^{\frac{2}{1+2\alpha}} \right)^{\frac{1+2\alpha}{2}} \left( \int_0^t |u(s)|^{\frac{2}{1+2\alpha}} \right)^{\frac{1+2\alpha}{2}}
\\
&\le C\left( t^{\frac1{1+2\alpha}} \right)^{\frac{1+2\alpha}2} \|u\|_{L^{\frac2{1-2\alpha}}(0,T)} \to0
\end{align*}
as $t\to0$. Next let $\alpha=\frac12$. We choose $\delta\in(0,\frac12)$. Setting $p=\frac1{1-\delta}$ and $q=\frac1\delta$, we apply the H\"older inequality to have 
\begin{align*}
|J^{1-\alpha} u(t)| 
\le& C\left( \int_0^t |t-s|^{-\frac12 p} ds \right)^{\frac1p}
\left( \int_0^t |u(s)|^q ds \right)^{\frac1q}
\\ 
\le& C\left( \int_0^t |t-s|^{-\frac12 \frac1{1-\delta}} ds \right)^{1-\delta}
\left( \int_0^t |u(s)|^{\frac1\delta} ds \right)^{\delta} \to 0
\end{align*}
as $t\to0$ by $0<\delta<\frac12$. Finally for $\frac12<\alpha<1$, we have
$$
|J^{1-\alpha} u(t)| \le C\int_0^t |t-s|^{-\alpha} ds \|u\|_{L^\infty(0,T)} \to 0
$$
as $t\to0$. Thus we see that
$$
\lim_{t\to0} J^{1-\alpha} u(t) = 0.
$$
Hence
\begin{align*}
\int_0^T e^{-pt} \partial_t^\alpha u(t) dt
=& \frac{e^{-pT}}{\Gamma(1-\alpha)} \int_0^T (t-s)^{-\alpha} u(s) ds 
\\
&+ \frac{p}{\Gamma(1-\alpha)} \int_0^T e^{-pt} \left( \int_0^t (t-s)^{-\alpha} u(s) ds \right) dt
=I_1+I_2.
\end{align*}
Since $|u(t)| \le C_0 e^{C_0t}$ for $t\ge0$ with some constant $C_0>0$, we estimate
\begin{align*}
|I_1| 
\le& Ce^{-pT} \int_0^T (T-s)^{-\alpha} e^{C_0s} ds
= Ce^{-pT} \int_0^T s^{-\alpha} e^{C_0(T-s)} ds
\\
=& Ce^{-(p-C_0)T} \int_0^T s^{-\alpha} e^{-C_0s} ds
\le Ce^{-(p-C_0)T} \int_0^\infty s^{-\alpha} e^{-C_0s} ds
= Ce^{-(p-C_0)T} \frac{\Gamma(1-\alpha)}{C_0^{1-\alpha}}.
\end{align*}
Hence if $p>C_0$, then $\lim_{T\to\infty} I_1 = 0$.

As for $I_2$, by the Fubini lemma, we see that
\begin{align*}
I_2 
=& \frac{p}{\Gamma(1-\alpha)} \int_0^T \left( \int_s^T e^{-pt} (t-s)^{-\alpha} dt \right) u(s) ds
\\
=& \frac{p}{\Gamma(1-\alpha)} \int_0^T \left( \int_0^{T-s} e^{-p\eta} \eta^{-\alpha} d\eta \right) e^{-ps} u(s) ds.
\end{align*}
For $p>C_0$, since $|u(s)|\le Ce^{C_0s}$ for some $s\ge0$, we have
$$
\left| \int_0^{T-s} e^{-p\eta} \eta^{-\alpha} d\eta e^{-ps} u(s)  \right|
\le C\left(\int_0^\infty e^{-p\eta} \eta^{-\alpha} d\eta \right) e^{-(p-C_0)s}
$$
for all $s>0$ and $T>0$, and the Lebesgue dominated convergence theorem yields 
\begin{align*}
\lim_{T\to\infty} I_2 
=& \frac{p}{\Gamma(1-\alpha)} \int_0^\infty \left( \int_0^\infty e^{-p\eta} \eta^{-\alpha} d\eta \right) e^{-ps} u(s) ds
\\
=& \frac{p}{\Gamma(1-\alpha)} \frac{\Gamma(1-\alpha)}{p^{1-\alpha}} \int_0^\infty  e^{-ps} u(s) ds 
=p^\alpha Lu(p) 
\end{align*}
for $p>C_0$. Thus the proof of the lemma is complete.
\end{proof}

\subsection{Some results from spectral theory}

We define the the operator $D_m^k$, $k\in\mathbb N$, related to the eigenvalue 
$\lambda_m$ of the operator $-A$ as follows
$$
D_m^k \varphi := \frac1{2\pi i} \int_{\gamma_m} (\eta - \lambda_m)^k 
(\eta - A)^{-1} \varphi d\eta,\quad \varphi\in L^2(\Omega),
$$
where $\gamma_m$ is a sufficiently small circle surrounding the eigenvalue 
$\lambda_m$ of the operator $-A$ (e.g., Kato \cite{Ka}). 
From the result from Suzuki and Yamamoto 
\cite{SYam}, we see that the multiplicity of the eigenvalue $\lambda_m$ is finite and we assume it as $m_\lambda$. 
Then we have
$$
D_m^k = 0 \mbox{ in $L^2(\Omega)$},\quad k\ge m_\lambda.
$$
We call $P_m:=D_m^0$ is the eigenprojection related to the eigenvalue $\lambda_m$ of the operator $-A$, and we see that
\begin{lem}
\label{lem-Pm}
Let $k_0$ be a positive integer. If $\varphi \in L^2(\Omega)$ satisfies $D_m^{k_0} P_m \varphi = 0$, then 
$$
D_m^{k_0-1} P_m \varphi \in {\rm Ker}(\lambda_m - A).
$$
\end{lem}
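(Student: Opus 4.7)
My plan is to exploit two algebraic identities for the operators $D_m^k$ that follow directly from their contour-integral definition via the resolvent calculus, and then combine them to extract the conclusion from the vanishing hypothesis. First, starting from the elementary identity
$$
(A-\lambda_m)(\eta - A)^{-1} = -I + (\eta-\lambda_m)(\eta-A)^{-1},
$$
I multiply both sides by $(\eta-\lambda_m)^{k-1}$, integrate around $\gamma_m$, and observe that $(\eta-\lambda_m)^{k-1}$ is entire so that its contour integral vanishes for $k\geq 1$; this yields
$$
(A-\lambda_m)\,D_m^{k-1}\varphi = D_m^{k}\varphi,\qquad k\geq 1,\ \varphi\in L^2(\Omega).
$$

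Second, using the Dunford--Taylor functional calculus (cf.\ Kato \cite{Ka}, Chapter III), or equivalently a direct Fubini computation with two concentric contours $\gamma_m$ and $\gamma_m'$ around $\lambda_m$ combined with the first resolvent identity $(\eta-A)^{-1}(\zeta-A)^{-1}=((\eta-A)^{-1}-(\zeta-A)^{-1})/(\zeta-\eta)$, I would establish the composition rule
$$
D_m^{j}\,D_m^{k} = D_m^{j+k},\qquad j,k\geq 0.
$$
In particular, $D_m^{k_0}P_m = D_m^{k_0}D_m^{0} = D_m^{k_0}$ and $D_m^{k_0-1}P_m = D_m^{k_0-1}$, so the $P_m$ in the statement is really only a reminder that the relevant objects sit in the generalized eigenspace.

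With these two identities, the conclusion is immediate. The hypothesis $D_m^{k_0}P_m\varphi=0$ reduces to $D_m^{k_0}\varphi=0$, and then the first identity taken at $k=k_0$ gives
$$
(A-\lambda_m)\bigl(D_m^{k_0-1}\varphi\bigr) = D_m^{k_0}\varphi = 0,
$$
so that $D_m^{k_0-1}P_m\varphi = D_m^{k_0-1}\varphi \in \ker(A-\lambda_m) = \ker(\lambda_m - A)$, as desired.

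The main obstacle is the careful justification of the product rule $D_m^{j}D_m^{k}=D_m^{j+k}$: one must pick two properly nested contours around $\lambda_m$ (so that $\zeta-\eta$ never vanishes on the product contour), apply Fubini to interchange the iterated integrals, and then use the first resolvent equation to split the product of resolvents into two terms, one of which integrates to zero because its integrand is holomorphic on one side of the contour. Once this algebraic skeleton is in place, no analytic subtleties remain and the rest of the proof is a one-line computation.
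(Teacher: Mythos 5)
Your proof is correct and is essentially the paper's argument: your first identity $(A-\lambda_m)D_m^{k-1}\varphi = D_m^{k}\varphi$ is exactly the paper's splitting $\lambda_m - A = (\lambda_m-\eta)+(\eta-A)$ inside the contour integral, with the term $\frac{1}{2\pi i}\int_{\gamma_m}(\eta-\lambda_m)^{k-1}\,d\eta$ vanishing because the integrand is entire. The composition rule $D_m^{j}D_m^{k}=D_m^{j+k}$ is true but superfluous here, since you could simply apply your first identity to $P_m\varphi$ in place of $\varphi$ (which is what the paper does, using $P_m\varphi\in\mathcal{D}(A)$ to commute $A$ with the resolvent).
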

\begin{proof}
Since $P_m \varphi \in \mathcal D(A)$, we see that $A (\eta - A)^{-1} P_m \varphi = (\eta - A)^{-1} A P_m \varphi$ for any $\varphi\in L^2(\Omega)$, hence that
\begin{align*}
(\lambda_m - A) D_m^{k_0 - 1} P_m \varphi 
= \frac1{2\pi i} \int_{\gamma_m} (\eta - \lambda_m)^{k_0 - 1}(\eta - A)^{-1} (\lambda_m - A) P_m\varphi d\eta
\end{align*}
By writting $\lambda_m - A = \lambda_m - \eta + \eta - A$, we have
\begin{align*}
(\lambda_m - A) D_m^{k_0 - 1} P_m \varphi 
=-\frac1{2\pi i} \int_{\gamma_m} (\eta - \lambda_m)^{k_0}(\eta - A)^{-1} P_m\varphi d\eta
+\frac1{2\pi i} \int_{\gamma_m} (\eta - \lambda_m)^{k_0 - 1} P_m\varphi d\eta.
\end{align*}
In view of the assumption $D_m^{k_0} = 0$ and the residue theory, it follows that the two terms on the right-hand side of the above equation are zero, that is, 
$$
D_m^{k_0 - 1} P_m \varphi \in {\rm Ker}(\lambda_m - A).
$$ 
This completes the proof.
\end{proof}

\section{Proof of Theorem \ref{thm-ucp}}
\label{sec-ucp}
This section is devoted to the proof of the first main result, Theorem \ref{thm-ucp}.
Before giving the proof, we first employ the Laplace transform treatment to show the uniqueness in determining the Neumann derivative of the initial value from the addition data of the solution on the subboundary , which plays crucial role in the proof of Theorem \ref{thm-ucp}. We have
\begin{lem}
\label{lem-Dm}
Assume $u_0\in L^2(\Omega)$ and $u\in L^2(0,T;H^2(\Omega)\cap H_0^1(\Omega))$, $u-u_0\in H_{\alpha}(0,T;L^2(\Omega))$ solves the initial-boundary value problem \eqref{eq-gov}.
If $\partial_{\nu_A} u = 0$ on $\Gamma\times(0,T)$, then for any $m,k\in\mathbb N$, $\partial_{\nu_A} D_m^k u_0 = 0$ on the subboundary $\Gamma$.
\end{lem}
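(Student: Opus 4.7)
The plan is to Laplace-transform the initial boundary value problem in time, then analytically continue the resulting identity in the spectral parameter, and finally apply the contour-integral definition of $D_m^k$.

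First, Lemma \ref{lem-analy} extends $t\mapsto u(\cdot,t)$ to an $H^2(\Omega)\cap H^1_0(\Omega)$-valued analytic function on $(0,\infty)$ with $\|u(\cdot,t)\|_{L^2(\Omega)}\le Ce^{Ct}$. Composing with the bounded trace $\partial_{\nu_A}\colon H^2(\Omega)\to H^{1/2}(\partial\Omega)$ and restricting to $\Gamma$ yields an analytic function from $(0,\infty)$ to $H^{1/2}(\Gamma)$. By hypothesis it vanishes on $(0,T)$, and hence on all of $(0,\infty)$ by the identity theorem. This is what allows taking Laplace transforms of the boundary data.

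Second, since $u-u_0\in V_\alpha(0,\infty;L^2(\Omega))$, Lemma \ref{lem-lap-caputo} (applied to the $L^2(\Omega)$-valued function $u-u_0$) gives $L(\partial_t^\alpha(u-u_0))(p)=p^\alpha\hat u(p)-p^{\alpha-1}u_0$, with $\hat u(p):=(Lu)(p)$. Combining with $L(Au)(p)=A\hat u(p)$ (valid by closedness of $A$), the PDE transforms into
\begin{equation*}
\hat u(p)=p^{\alpha-1}(p^\alpha+A)^{-1}u_0,
\end{equation*}
for $\Re p$ larger than some $p_0>0$; in particular $\hat u(p)\in\mathcal{D}(A)$. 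Commuting the Laplace transform with the bounded trace $\partial_{\nu_A}|_\Gamma$ converts the hypothesis into
\begin{equation*}
\partial_{\nu_A}\bigl[(p^\alpha+A)^{-1}u_0\bigr]=0\quad\text{on }\Gamma,\quad p>p_0.
\end{equation*}

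Third, I would analytically continue in the spectral parameter. Setting $\eta=-p^\alpha$, as $p$ runs over $(p_0,\infty)$, $\eta$ sweeps an interval of the negative real axis. The map $\eta\mapsto \partial_{\nu_A}|_\Gamma\bigl[(\eta-A)^{-1}u_0\bigr]$ is holomorphic on the resolvent set $\rho(A)$ with values in $H^{1/2}(\Gamma)$, and since $A$ has compact resolvent its spectrum is a discrete subset of $\mathbb{C}$, so $\rho(A)$ is open and connected. The identity theorem then forces the map to vanish on all of $\rho(A)$. Each circle $\gamma_m$ lies in $\rho(A)$, and interchanging $\partial_{\nu_A}$ with the contour integral (again by continuity of the trace) gives
\begin{equation*}
\partial_{\nu_A}D_m^k u_0=\frac{1}{2\pi i}\int_{\gamma_m}(\eta-\lambda_m)^k\,\partial_{\nu_A}\bigl[(\eta-A)^{-1}u_0\bigr]\,d\eta=0\quad\text{on }\Gamma,
\end{equation*}
for every $m,k\in\mathbb{N}$, as desired.

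The main obstacle I anticipate is the rigorous justification of the two commutation steps: interchanging the Laplace transform with $\partial_{\nu_A}|_\Gamma$, and the same trace with the contour integral defining $D_m^k$. Both rest on continuity of $\partial_{\nu_A}\colon\mathcal{D}(A)\to H^{1/2}(\partial\Omega)$ together with the exponential bound from Lemma \ref{lem-analy} (which, via a resolvent estimate for $(p^\alpha+A)^{-1}$, upgrades to the graph norm needed for traces). The analytic continuation step itself is routine once discreteness of $\sigma(A)$ is noted.
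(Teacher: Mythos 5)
Your proposal is correct and follows essentially the same route as the paper: extend $u$ in time by analyticity, Laplace-transform the equation to obtain $\hat u(p)=p^{\alpha-1}(p^\alpha+A)^{-1}u_0$, deduce $\partial_{\nu_A}(\eta-A)^{-1}u_0=0$ on $\Gamma$ for $\eta\in\rho(A)$, and conclude by the contour-integral definition of $D_m^k$. Your explicit justification of the analytic continuation in $\eta$ (via discreteness of $\sigma(A)$ and connectedness of $\rho(A)$) is a point the paper passes over silently, but it is the same argument.
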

\begin{proof}
From the $t$-analyticity of the solution stated in Lemma \ref{lem-analy}, we can make unique extension for $u(x,t)$, $t\in(0,T)$ to $(0,\infty)$. Therefore, taking Laplace transforms on both sides of \eqref{eq-gov} implies
\begin{equation}
\label{eq-lap}
\left\{
\begin{alignedat}{2}
& A \widehat u(s) + s^{\alpha } \widehat u(s) = s^{\alpha -1} u_0
&\quad& \mbox{in $\Omega$,}
\\
&\widehat u(s)|_{\partial\Omega}=0, &\quad& \Re s\ge s_0
\end{alignedat}
\right.
\end{equation}
together with the formula from Laplace transform in Lemma \ref{lem-lap-caputo}.

Therefore for $s^{\alpha }$ in the resolvent set $\rho(A)$ of the operator $A$, we see that
$$
\widehat u(s) = s^{\alpha -1}  \left(s^{\alpha } + A\right)^{-1} (u_0).
$$
Moreover, the assumption $\partial_{\nu_A}u=0$ on $\Gamma\times(0,T)$ combined  with the $t$-analyticity of the solution, it follows that
$$
\partial_{\nu_A} \widehat u(s) = 0 \quad \mbox{on $\Gamma$}.
$$
Now letting $\eta:=-s^{\alpha }$, we conclude from the above equality that 
$$
\partial_{\nu_A} (\eta-A)^{-1} u_0 = 0 \quad \mbox{on $\Gamma$.}
$$
for any $\eta\in \rho(A)$, from which we further verify
$$
\frac1{2\pi i} \int_{\gamma_m} (\eta - \lambda_m)^k \partial_{\nu_A}(\eta - A)^{-1} u_0 d\eta = 0,
$$
that is, $\partial_{\nu_A} D_m^k u_0 = 0$ in view of the definition of the operator $D_m^k$. This completes the proof of the lemma.
\end{proof}

Now we are ready for the proof of our first main result.
\begin{proof}[Proof of Theorem \ref{thm-ucp}]
From Lemma \ref{lem-Pm} we derive
$$
(\lambda_m - A)(D_m^{m_\lambda-1}P_mu_0) = 0\quad \mbox{in $\Omega$.}
$$
Moreover, since $D_m^k P_m u_0\in H^2(\Omega)\cap H_0^1(\Omega)$ and $\partial_{\nu_A} D_m^k P_m u_0=0$ on $\Gamma$, $k=1,2,\cdots$, we conclude from the unique continuation principle for the elliptic equations that $D_m^{m_\lambda-1}P_m u_0 = 0$ in $\Omega$. Again similar argument yields $D_m^{m_\lambda-2}P_m u_0=0$ in $\Omega$. Continuing this procedure, we obtain $P_m u_0=0$ in $\Omega$ for any $m\in\mathbb N$. Therefore we must have $u_0=0$ from 
the completeness of the generalised eigenfunctions (see the last chapter of \cite{Ag}).

Finally, from the uniqueness for the initial-boundary value problem \eqref{eq-gov}, it follows that $u\equiv0$. This completes the proof of our first main theorem.
\end{proof}

\section{Proof of Theorem \ref{thm-isp}}
\label{sec-isp}
Now let us turn to the proof of the uniqueness of the inverse source problem. The argument is mainly based on the weak unique continuation and the following Duhamel's principle for time-fractional diffusion equations.

\begin{lem}[Duhamel's principle]
\label{lem-Duhamel}
Let $f\in L^2(\Omega)$ and $\mu\in C^1[0,T]$. Then the weak solution $y$ to 
the initial-boundary value problem \eqref{eq-sp} allows the representation
\begin{equation}
\label{eq-Duhamel}
y(\cdot,t)=\int_0^t\theta(t-s)\,v(\,\cdot\,,s) ds,\quad 0<t<T,
\end{equation}
where $v$ solves the homogeneous problem
\begin{equation}
\label{equ-homo}
\begin{cases}
\partial_t^\alpha v + A v=0 & \mbox{in }\Omega\times(0,T),\\
v=f & \mbox{in }\Omega\times\{0\},\\
v=0  & \mbox{on }\partial\Omega\times(0,T)
\end{cases}
\end{equation}
and $\theta\in L^1(0,T)$ is the unique solution to the fractional integral equation
\begin{equation}\label{eq-FIE-te}
J^{1-\alpha}\theta(t)=\mu(t),\quad 0<t<T.
\end{equation}
\end{lem}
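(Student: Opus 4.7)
The plan is to construct $\widetilde{y}(x,t):=\int_0^t \theta(t-s)\,v(x,s)\,ds$ explicitly, verify that this candidate solves \eqref{eq-sp} in the framework of the paper, and then conclude $y=\widetilde{y}$ by the uniqueness asserted for \eqref{eq-sp}. The starting point is to solve the Abel equation \eqref{eq-FIE-te}. Applying $J^\alpha$ to both sides of $J^{1-\alpha}\theta=\mu$ and using $J^\alpha J^{1-\alpha}=J^1$ gives $\int_0^t \theta(s)\,ds=J^\alpha\mu(t)$, so $\theta=\tfrac{d}{dt}J^\alpha\mu$. Since $\mu\in C^1[0,T]$, integration by parts produces the closed form
$$
\theta(t)=\frac{\mu(0)\,t^{\alpha-1}}{\Gamma(\alpha)}+J^\alpha\mu'(t),
$$
which lies in $L^1(0,T)$ for every $0<\alpha<1$; uniqueness in $L^1(0,T)$ follows from the injectivity of $J^{1-\alpha}$.

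Next I would rewrite \eqref{equ-homo} in integrated form. Because $v-f\in\Halp$ with $\pppa(v-f)+Av=0$, applying $J^\alpha$ and using $J^\alpha\pppa=\mathrm{id}$ on $\Halp$ yields
$$
v(\cdot,t)+J^\alpha Av(\cdot,t)=f\quad\text{in }L^2(\Omega),\ \text{a.e. }t\in(0,T).
$$
Convolving both sides in $t$ with $\theta$, using the associativity of convolution and the fact that $A$ can be pulled through the Bochner integral in $s$,
$$
\widetilde{y}+J^\alpha A\widetilde{y}=f\,(\theta\ast 1)=f\cdot J^1\theta=f\cdot J^\alpha J^{1-\alpha}\theta=f\cdot J^\alpha\mu=J^\alpha(\mu f).
$$

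The last step is a regularity check that allows $\pppa=(J^\alpha)^{-1}$ to act on $\widetilde{y}$. From $v\in L^2(0,T;H^2(\Omega)\cap H^1_0(\Omega))$ given by \eqref{rslt-regu} and $\theta\in L^1(0,T)$, Young's convolution inequality places $\widetilde{y}$ in the same space, so the Dirichlet condition $\widetilde{y}|_{\partial\Omega\times(0,T)}=0$ holds and $A\widetilde{y}\in L^2(0,T;L^2(\Omega))$. The identity above then gives $\widetilde{y}=J^\alpha(\mu f-A\widetilde{y})\in J^\alpha L^2(0,T;L^2(\Omega))=\Halp\,;\,L^2(\Omega)$, and applying $\pppa$ delivers $\pppa\widetilde{y}+A\widetilde{y}=\mu f$ together with $\widetilde{y}(x,\cdot)-0\in\Halp$. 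The uniqueness for \eqref{eq-sp} recalled before the lemma then forces $y=\widetilde{y}$, which is \eqref{eq-Duhamel}.

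The step I expect to be the main obstacle is the commutation $\theta\ast(Av)=A(\theta\ast v)$ in $L^2(0,T;L^2(\Omega))$. Formally it follows because $A\colon H^2(\Omega)\cap H^1_0(\Omega)\to L^2(\Omega)$ is a closed linear operator that can be pulled inside the Bochner integral in $s$, but one must verify that $s\mapsto v(\cdot,s)$ is strongly measurable into $H^2(\Omega)\cap H^1_0(\Omega)$ and that $\widetilde{y}(\cdot,t)\in\mathcal D(A)$ for a.e.\ $t$, which is where the bookkeeping on function spaces needs to be done most carefully.
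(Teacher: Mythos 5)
Your proposal is correct, but it is worth noting that the paper does not prove this lemma at all: it defers to \cite{LRY15} (Lemma 4.1) and \cite{L15} (Lemma 4.2), where the Duhamel representation is verified through the eigenfunction expansion of the solution and termwise identities for Mittag-Leffler functions, with the remark that ``the same argument still works'' for non-symmetric $A$ (which, strictly speaking, would require redoing that computation with the generalized eigenprojections $D_m^k$, since the expansion is no longer orthogonal). Your route is genuinely different and, in this setting, cleaner: you pass to the integrated equations $v+J^{\alpha}Av=f$ and $\www y+J^{\alpha}A\www y=J^{\alpha}(\mu f)$, use only the semigroup property $J^{\alpha}J^{1-\alpha}=J^{1}$, associativity of convolution, Hille's theorem for the closed operator $A$, and the uniqueness of the forward problem already quoted from \cite{KRY}. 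This makes no use of spectral decomposition, so the symmetry of $A$ is irrelevant, and the regularity bookkeeping you flag is exactly right and closes cleanly: $\theta\in L^{1}(0,T)$ together with $v\in L^{2}(0,T;H^{2}(\OOO)\cap H^{1}_{0}(\OOO))$ gives $\www y$ in the same class by Young's inequality, strong measurability of $s\mapsto v(\cdot,s)$ into $H^{2}(\OOO)\cap H^1_0(\OOO)$ is part of \eqref{rslt-regu}, and $\www y=J^{\alpha}(\mu f-A\www y)\in J^{\alpha}L^{2}=\Halp$ valued in $L^2(\OOO)$ licenses the application of $\pppa=(J^{\alpha})^{-1}$. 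What the cited expansion approach buys instead is an explicit formula for $y$; what yours buys is a self-contained proof at exactly the regularity level ($f\in L^2(\OOO)$, weak solutions in the $\Halp$ framework) that the paper claims but does not justify in detail.
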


The above conclusion is almost identical to Liu, Rundell and Yamamoto \cite[Lemma 4.1]{LRY15} for the single-term case and Liu \cite[Lemma 4.2]{L15} for the multi-term case, except for the existence of non-symmetric part. Since the same argument still works in our setting, we omit the proof here.

\begin{proof}[Proof of Theorem \ref{thm-isp}]
Let $y$ satisfy the initial-boundary value problem \eqref{eq-sp} with $f(x)\,\mu(t)$, where $f\in H_0^1(\Omega)$ and $\mu\in C^1[0,T]$. Then $y$ takes the form of \eqref{eq-Duhamel} according to Lemma \ref{lem-Duhamel}. Performing the Riemann-Liouville fractional integral $J^{1-\alpha}$ to \eqref{eq-Duhamel}, we deduce
\begin{align*}
J^{1-\alpha}u(\,\cdot\,,t) & =\frac1{\Gamma(1-\alpha)}\int_0^t\frac1{(t-\tau)^{\alpha}}\int_0^\tau\theta(\tau-\xi)\,v(\,\cdot\,,\xi)\, d \xi d \tau\\
& =\frac1{\Gamma(1-\alpha )}\int_0^tv(\,\cdot\,,\xi)\int_\xi^t\frac{\theta(\tau-\xi)}{(t-\tau)^{\alpha }}\, d \tau d \xi\\
& =\int_0^tv(\,\cdot\,,\xi)\frac1{\Gamma(1-\alpha )}\int_0^{t-\xi}\frac{\theta(\tau)}{(t-\xi-\tau)^{\alpha }}\, d \tau d \xi\\
& =\int_0^tv(\,\cdot\,,\xi)J^{1-\alpha }\theta(t-\xi)\, d \xi=\int_0^t\mu(t-\tau)\,v(\,\cdot\,,\tau)\, d \tau,
\end{align*}
where we applied Fubini's theorem and used the relation \eqref{eq-FIE-te}. Then the vanishment of $\partial_{\nu_A}u$ on $\Gamma\times(0,T)$ immediately yields
\[
\int_0^t\mu(t-\tau)\partial_{\nu_A}v(\,\cdot\,,\tau)\, d \tau=0\quad\mbox{on }\Gamma,\ 0<t<T.
\]
Differentiating the above equality with respect to $t$, we obtain
\[
\mu(0)\partial_{\nu_A}v(\,\cdot\,,t)+\int_0^t\mu'(t-\tau)\partial_{\nu_A}v(\,\cdot\,,\tau) d \tau=0,\quad\mbox{on }\Gamma,\ 0<t<T.
\]
Owing to the assumption that $|\mu(0)|\ne0$, we estimate
\begin{align*}
\|\partial_{\nu_A}v(\,\cdot\,,t)\|_{L^2(\Gamma)} & \le\frac1{|\mu(0)|}\int_0^t|\mu'(t-\tau)|\|\partial_{\nu_A}v(\,\cdot\,,\tau)\|_{L^2(\Gamma)}\, d \tau
\\
& \le\frac{\|\mu\|_{C^1[0,T]}}{|\mu(0)|} \int_0^t\|\partial_{\nu_A}v(\,\cdot\,,\tau)\|_{L^2(\Gamma)}\, d \tau,\quad 0<t<T.
\end{align*}
Taking advantage of Gronwall's inequality, we conclude $\partial_{\nu_A}v=0$ on $\Gamma\times(0,T)$. Finally, we apply Theorem \ref{thm-ucp} to the homogeneous problem \eqref{equ-homo} to derive $v=0$ in $\Omega\times(0,T)$, implying $f=v(\,\cdot\,,0)=0$. This completes the proof of Theorem \ref{thm-isp}.
\end{proof}

\section{Concluding remarks}
\label{sec-rem}

In this paper, we considered the multi-term time-fractional diffusion equation with advection. By taking Laplace tranform argument, we changed the problem \eqref{eq-gov} to an elliptic equation in the frequency domain. We then proved the weak unique continuation property of the solution to \eqref{eq-gov} by using the spectrol decomposition of the general operator and unique continuation for the elliptic equation. The statement concluded in Theorem \ref{thm-ucp} will be called as the weak unique continuation property because we impose the homogeneous Dirichlet boundary condition on the whole boundary, which is absent in the usual parabolic prototype. (see Cheng, Lin and Nakamura \cite{CLN}, Lin and Nakamura \cite{LN} and Xu, Cheng and Yamamoto \cite{XCY}). As a direct conclusion of the weak unique continuation, we proved that the uniqueness in determining the source term from the boundary measurement. 

Let us mention that the argument used for the proof of the weak unique continuation principle heavily relies on the choice of the coefficients of the fractional derivatives, namely constant coefficients. 
It would be interesting to investigate what happens if this assumption is not valid.
On the other hand, we mention that in the one-dimensional case, the unique continuation (not weak type) for the fractional diffusion equation is valid, one can refer to the recent work from Li and Yamamoto \cite{LY19} in which the theta function method and Phragm\'em-Lindel\"of principle play essential roles in the proof.  Unfortunately, the technique used in \cite{LY19} cannot work in showing the unique continuation for the fractional diffusion equation in the general dimensional case due to the absence of the theta function. This is one of reasons why the unique continuation in the general case was only established in the weak sense. 

To sum up, to overcome the above conjuecture, a new approach may need to be constructed rather than the Laplace transform and spectral decomposition.

\section*{Acknowledgement}
The first author is supported by National Natural Science Foundation of China (No. 11871240) and self-determined research funds of CCNU from the colleges’ basic research and operation of MOE (No. CCNU20TS003).
The second author thanks National Natural Science Foundation of China (No. 11801326). The third author thanks the ENS Rennes and the AMOPA Section d'Ille-et-Vilaine (35) for their financial support.
The fourth author is supported by Grant-in-Aid for Scientific Research
(S) 15H05740 of Japan Society for the Promotion of Science, NSFC (No.
11771270, 91730303) and the \lq\lq RUDN University Program 5-100\rq\rq. This work was also supported by A3 Foresight Program \lq\lq Modeling and Computation of Applied Inverse Problems\rq\rq of Japan Society for the Promotion of Science.


\end{document}